\documentclass[11pt,oneside]{amsart}
\usepackage{amscd,amsmath,amssymb,amsfonts}
\usepackage[all]{xy}
\usepackage{hyperref}
\usepackage{url}
\usepackage{stmaryrd}

\usepackage[a4paper, total={6in, 8in}]{geometry}

\usepackage{xcolor}

\usepackage{upgreek}

\theoremstyle{plain}
\newtheorem{thm}{Theorem}
\newtheorem{lem}[thm]{Lemma}

\newtheorem{prop}[thm]{Proposition}

\theoremstyle{definition}

\newtheorem{claim}[thm]{Claim}
\newtheorem{rmk}[thm]{Remark}

\numberwithin{thm}{section}
\numberwithin{equation}{section}

\newcommand{\ga}[2]{\begin{gather}\label{#1}#2 \end{gather}}

\DeclareMathOperator{\im}{{\rm im}}

\newcommand{\C}{{\mathbb C}}

\newcommand{\G}{{\mathbb G}}

\begin{document}

\title{There is no definable Grauert Direct Image Theorem}
\author{H\'el\`ene Esnault \and Moritz Kerz}
\address{Freie Universit\"at Berlin, Berlin,  Germany
}
\email{esnault@math.fu-berlin.de }
\address{   Fakult\"at f\"ur Mathematik \\
Universit\"at Regensburg \\
93040 Regensburg, Germany}
\email{moritz.kerz@mathematik.uni-regensburg.de}
\thanks{ The second author was supported by the SFB 1085 Higher Invariants, Universit\"at Regensburg}

\begin{abstract}
  We prove the claim in the title by showing that a definable Grauert Direct Image
  Theorem in o-minimal geometry would imply a weak
  representability-like property of the definable Picard functor. However, this weak representability cannot hold because of the definable Chow Theorem of Peterzil and Starchenko.
\end{abstract}

\maketitle

\medskip

\section{Introduction}\label{sec:intro}

Definable complex analytic geometry was initiated by Peterzil and Starchenko, see for
example~\cite{PS09}, who obtained fundamental results such as a definable analytic Noether normalization theorem and
the famous definable Chow Theorem.

This foundational work on definable complex analytic geometry was continued by Bakker,
Brunebarbe and Tsimerman in  \cite[Sec.~2]{BBT23}. They applied it to prove Griffiths'
conjecture.  In doing so, they define the
notion of a definable complex analytic space. They then show that many of the basic
results of complex analytic geometry hold in this context, in particular they prove
the  definable  Oka Coherence Theorem \cite[Thm.~2.21]{BBT23} and the definable Grauert Direct Image
Theorem for finite morphisms~\cite[Prop.~2.52]{BBT23}.

It was anticipated by experts that a general definable Grauert Direct Image Theorem might
hold for arbitrary projective maps between algebraic varieties. We had hoped to be able to
apply such a theorem in order to study moduli spaces of definable analytic vector bundles
on compact Riemann surfaces and similar moduli spaces along the lines
of~\cite[p.112]{Sim94}. Unfortunately, it turns out that there can be no definable Grauert
Direct Image Theorem for non-finite morphisms in any reasonable generality. This is the
topic of this note.

Fix any o-minimal structure which expands $\mathbb
  R_{\rm an}$.

\begin{thm}\label{thm:main}
  There exists a smooth projective  definable morphism $f\colon X\to S$ of definable complex analytic spaces
  and a definable coherent sheaf $\mathcal F$ on $X$ such that $f_* \mathcal F$ is not
  definable coherent. More precisely, for $S=\mathbb A^1_\C\setminus \{ 0 \}$,
  $X=Y\times S$ and $f$ the projection onto $S$, where $Y$ is a smooth projective algebraic variety with $H_1(Y,\mathbb Q)\ne 0 $, there always
  exists a semi-algebraically definable analytic line bundle $\mathcal L$ on $X$  such that the
  sheaves $f_* \mathcal L(n)$ for $n>0$ cannot all be
  definably coherent with respect to our fixed o-minimal structure.
\end{thm}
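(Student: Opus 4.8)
The plan is to prove the more precise second assertion (which implies the first) by contraposition: I assume that for the projection $f\colon X=Y\times S\to S$, $S=\A^1_\C\setminus\{0\}=\G_m$, the sheaf $f_*\mc F$ is definable coherent for \emph{every} definable coherent sheaf $\mc F$ on $X$, and I derive a contradiction with the Definable Chow Theorem. (As in the abstract, this hypothesis amounts to a weak representability of the definable Picard functor of $Y$.) The hypothesis $H_1(Y,\Q)\ne 0$ enters only to guarantee that $H^1(Y,\mc O_Y)\ne 0$, so that $A:=\Pic^0(Y)$ is a positive-dimensional abelian variety and $\G_m$ carries holomorphic maps to $A$ which are not algebraic, nor --- it will turn out --- definable.

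\textbf{Construction of $\mc L$.} Choose a nonzero class $\gamma\in H^1(Y,\Z)$ and, via the exponential sequence of $Y$, regard it as a nonzero vector of $W:=H^1(Y,\mc O_Y)$ with $A=W/H^1(Y,\Z)$. The holomorphic map $\C\to A$, $z\mapsto[\tfrac{z}{2\pi i}\gamma]$, is $2\pi i$-periodic and so descends to a holomorphic map $\phi\colon\G_m\to A$, $\phi(e^z)=[\tfrac{z}{2\pi i}\gamma]$, which is non-constant since $\gamma\ne 0$ (the segment $\{t\gamma:0\le t\le 1\}$ is not contained in the lattice). I take $\mc L$ to be the pullback of the Poincar\'e bundle of $Y\times A$ along $\id_Y\times\phi$; thus $\mc L|_{Y\times\{s\}}\cong\phi(s)$ and one computes $c_1(\mc L)=\gamma\otimes\delta\in H^1(Y,\Z)\otimes H^1(\G_m,\Z)\subset H^2(X,\Z)$, this class lifting to $\Pic^{\mathrm{an}}(X)$ because $H^1(\G_m,\mc O)=0$ and $\gamma\otimes\delta$ dies in $H^2(X,\mc O)$. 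The essential point --- and, I expect, the main difficulty of the whole proof --- is the claim that $\mc L$ may be chosen \emph{semi-algebraically definable} although $\phi$ is not definable with respect to any o-minimal structure: definability of $\mc L$ is a condition on a \v{C}ech cocycle of definable holomorphic transition functions on a definable cover of $X$, and it does not propagate to the cohomological datum $\phi$, which "integrates" such a cocycle once around the loop of $\G_m$.

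\textbf{The contradiction.} Suppose now that $f_*\mc L(n)$ is definable coherent for all $n>0$. Because $\phi(\G_m)$ is relatively compact in $A$ the family $\{\mc L_s\}_{s\in S}$ is bounded, so I may fix $n_0\gg 0$ with $\mc L_s(n_0)$ very ample and $H^{>0}(Y,\mc L_s(n_0))=0$ for every $s$. Definable cohomology and base change (available from the circle of results around the definable Oka Coherence Theorem and the definable Grauert theorem for finite maps in \cite[Sec.~2]{BBT23}) then gives that $\mc E:=f_*\mc L(n_0)$ is a definable vector bundle of rank $\chi(\mc O_Y(n_0))$ on $S$, that the evaluation map $f^*\mc E\to\mc L(n_0)$ is surjective, and that it defines a definable closed immersion $X\hookrightarrow\P(\mc E)$ over $S$ with $\mc O_{\P(\mc E)}(1)|_X\cong\mc L(n_0)$. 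Trivializing the definable projective bundle $\P(\mc E)$ over a finite definable open cover of $\G_m$ --- or over $\G_m$ itself, by a definable Oka argument for vector bundles on $\G_m$ --- produces a definable closed analytic subvariety $Z\subset\P^N\times\G_m$ with a definable isomorphism $Z\cong X$ over $\G_m$ carrying $\mc O_{\P^N}(1)|_Z$ to $\mc L(n_0)$. By the Definable Chow Theorem of Peterzil--Starchenko \cite{PS09}, applied to the definable closed analytic subset $Z$ of the quasi-projective variety $\P^N\times\G_m$, the subvariety $Z$ is algebraic. Hence $Z\to\G_m$ with its relative polarization is an algebraic family of smooth projective varieties isomorphic to $Y$, equipped with an algebraic line bundle restricting to $\mc L_s(n_0)$ over $s$; subtracting the constant class $[\mc O_Y(n_0)]$ this yields an algebraic morphism $\G_m\to A$ which agrees with $\phi$ on $\C$-points. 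But any morphism from $\G_m$ to an abelian variety extends to a morphism $\P^1\to A$ and is therefore constant, so $\phi$ is constant, contradicting the construction. (Equivalently, in the language of the abstract: weak representability of the definable Picard functor would classify $\mc L$ by a \emph{definable} map $S\to\Pic^0(Y)$, and no non-constant such map exists.)

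\textbf{Where the difficulty lies.} Everything in the previous paragraph is a routine transcription into definable geometry of the classical passage from Grauert's theorem to a relative projective embedding, followed by a direct application of definable Chow; the load-bearing step is the construction of $\mc L$, i.e.\ the existence of a semi-algebraically definable analytic line bundle on $Y\times\G_m$ whose fibrewise classifying map to $\Pic^0(Y)$ is transcendental. That definability of the line bundle does not force definability of its classifying map is exactly the phenomenon that distinguishes the definable Picard functor from its algebraic avatar, and verifying it for the explicit $\mc L$ above is the heart of the matter.
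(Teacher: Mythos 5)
Your overall strategy coincides with the paper's: produce a definable line bundle on $Y\times\G_m$ whose fibrewise classifying map to $\Pic^0(Y)$ is non-constant, show that definable coherence of all $f_*\mathcal L(n)$ would force that classifying map to be definable, and then invoke the Definable Chow Theorem together with the fact that morphisms from a rational variety to an abelian variety are constant. However, your write-up has a genuine gap precisely at the step you yourself flag as ``the heart of the matter'': you \emph{assert} that the pullback of the Poincar\'e bundle along the non-definable map $\phi(e^z)=[\tfrac{z}{2\pi i}\gamma]$ can be chosen semi-algebraically definable, and you offer only the heuristic that definability of a \v Cech cocycle need not propagate to its cohomological classifying datum. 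That heuristic is correct, but it is not a construction. The paper closes exactly this gap with Proposition~\ref{prop:charvardef}: one starts not from a map to $\Pic^0(Y)$ but from an \emph{algebraic group homomorphism} $\G_m\to\mathrm{Char}(Y)=\Hom(\pi_1^{\rm ab}(Y),\G_m)$ (non-constant because $H_1(Y,\Q)\ne 0$), lifts it algebraically to the group of \v Cech $1$-cocycles $\check Z^1(\mathcal V,\C^\times)$ using the splitting of the exact sequence \eqref{eq:shdiaggr} of diagonalizable groups over a finite contractible definable cover $\mathcal V$ of $Y$, and thereby obtains genuinely algebraic (hence semi-algebraically definable) transition functions for a rank-one local system $\mathbb L$ on $Y\times\G_m$; the line bundle is then $\mathcal L=\mathbb L\otimes_{f^{-1}\mathcal O_{\G_m}}\mathcal O_X$, and non-constancy of $\sigma_{\mathcal L}$ follows from Hodge theory applied to the unitary fibers $|s|=1$. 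Without some such argument your $\mathcal L$ is not known to exist as a definable object, and the theorem is not proved.

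There is a second, smaller gap in your contradiction step. You propose to trivialize the definable projective bundle $\P(\mathcal E)$ over $\G_m$ globally (``by a definable Oka argument for vector bundles on $\G_m$'') so as to land a definable closed analytic subset $Z$ inside the algebraic variety $\P^N\times\G_m$ and apply definable Chow. No such definable Oka principle is available, and trivializing only over the members $U_i$ of a finite definable cover leaves you with closed analytic subsets of $\P^N\times U_i$, to which \cite[Cor.~4.5]{PS09} does not apply since $U_i$ is not algebraic. The paper avoids this by resolving $\mathcal L$ by sums of $\mathcal O_{X_U}(-n)$ so that the classifying construction maps into $\mathrm{Grass}_S(\oplus f_*\mathcal O_X(m-n),r)$ and Grothendieck's Quot scheme, both of which are \emph{algebraic} over $S$; definable Chow is then applied directly to the globally defined definable map $\sigma_{\mathcal L}\colon\G_m\to\mathrm{Pic}(Y)$ between algebraic varieties. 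You would need to reroute your argument through some such algebraic ambient object.
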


In the theorem $f_*$ denotes the pushforward of sheaves on the definable sites relative to the fixed o-minimal structure.

\begin{rmk}
It seems possible that Theorem~\ref{thm:main} holds true for all o-minimal structures. The
only place in the proof where we use $\mathbb R_{\rm an}$  is Lemma~\ref{lem:gagash}.
\end{rmk}

The idea to prove Theorem~\ref{thm:main} is to assume that the definable Grauert Direct Image
Theorem is true and then to deduce a contradiction along the following lines.  Consider a
projective flat morphism
$f\colon X\to S$   with geometrically integral fibers where $X$ and $S$ are complex algebraic
varieties. For a definable line bundle $\mathcal L$ on $X$,   the induced complex analytic
map $\sigma_{\mathcal F}\colon S\to \mathrm{Pic}(X/S)$ to the Picard scheme would be
definable if a definable Grauert Direct Image Theorem would hold, see
Proposition~\ref{prop:defsec}.

 However, it is not difficult to find  such a geometric setting where $S$ is a rational
variety, where $X=Y\times S$ with $Y$ a  smooth projective algebraic variety and where $\mathcal L$ is the line bundle associated to a non-trivial family of rank one
local systems on $Y$ parametrized by $ S$, see Subsection~\ref{subsec:fam}. Then
\[
  \sigma_{\mathcal F}\colon S\to \mathrm{Pic}(Y)\times S \cong \mathrm{Pic}(X/S)
\]
gives rise to a non-constant definable complex analytic morphism from $S$ to the abelian variety
$\mathrm{Pic}^0(Y)$. By the definable Chow Theorem~\cite[Cor.~4.5]{PS09}  this morphism is algebraic, but there is no non-constant
algebraic morphism from a rational variety   $Z$ to an abelian variety $A$.

\medskip

{\it Acknowledgement.} We thank Ben Bakker and Jacob Tsimerman for friendly comments on
our construction and Bruno Klingler for a discussion.

\section{The Construction}

This section provides a detailed account of the construction outlined in the discussion
following Theorem~\ref{thm:main}.

\subsection{Definability of sections of the Picard scheme}
In the following we fix an o-minimal structure which expands $\mathbb R_{\rm an}$.
Throughout the subsequent discussion, the Picard scheme may be replaced, in greater generality, with the coarse moduli scheme of semi-stable coherent sheaves.

For a definable complex analytic space $U$ we denote by $\mathcal O_U$ the sheaf  of
definable complex analytic functions.  A definable  line bundle $\mathcal L$ is a sheaf of $\mathcal
O_U$-modules such that there is a finite covering $U=\cup_{i\in I} U_i$ by definable open
subsets $U_i\subset U$ with $\mathcal
L|_{U_i}\cong \mathcal O_{U_i}$ for all $i\in I$.
We denote by $\mathcal L^{\rm an}$ the underlying  complex analytic line bundle.

Consider a  flat projective morphism with geometrically integral fibers
$f\colon X\to S$   and with $X$ and $S$ algebraic
varieties. Let $\mathrm{Pic}(X/S)$ be Grothendieck's Picard scheme~\cite{Gro95}. It
is well-known that for any analytic open subset $U\subset S$ and an analytic line bundle
$\mathcal L$ on $X_U=f^{-1}(U)$ the induced map on $\C$-points $\sigma_{\mathcal L}\colon U
\to \mathrm{Pic}(X/S)|_U$ is
holomorphic, see~\cite[Sec.~5]{Sim94}.

\begin{prop}\label{prop:defsec}
  Let $\mathcal L$ be a definable line bundle on $X_U$.
If for all integers $n>0$ the sheaf $f_* \mathcal L (n)$ on $U$ is definable
coherent, then  the section $\sigma_{\mathcal L}$ is definable.
\end{prop}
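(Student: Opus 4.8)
The plan is to exhibit $\sigma_{\mathcal L}$, on a finite definable open cover of $U$, as a composition of a \emph{definable} classifying map into a relative Hilbert scheme of divisors with \emph{algebraic} morphisms of schemes, so that its definability becomes formal. Since definability may be checked on a finite definable cover, and a definable open $U$ has finitely many connected components on each of which the Hilbert polynomial $P$ of the fibres of $\mathcal L$ (with respect to a fixed relatively very ample $\mathcal O_X(1)$) is constant, I may assume $U$ connected and that $\sigma_{\mathcal L}$ factors through the scheme $\mathrm{Pic}^P(X/S)$ of line bundles of Hilbert polynomial $P$, which is of finite type over $S$ by Grothendieck \cite{Gro95}. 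Hence the family of all line bundles of Hilbert polynomial $P$ on fibres of $f$ is bounded, so there is a single integer $n>0$ — depending only on $X/S$, $\mathcal O_X(1)$ and $P$ — such that $\mathcal L(n)|_{X_t}$ is globally generated with vanishing higher cohomology for \emph{every} $t\in U$. Fix such an $n$; only this one value of the hypothesis will be used.

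By the ordinary analytic Grauert theorem and cohomology and base change, $E:=f_*\mathcal L(n)$ is a locally free analytic sheaf of rank $r=P(n)\ge 1$ on $U$ with $E_t\cong H^0(X_t,\mathcal L(n)|_{X_t})$; being definable coherent by hypothesis, it is definably locally free, so after passing to a finite definable open cover $U=\bigcup_j V_j$ I may assume each $V_j$ connected and $E|_{V_j}\cong\mathcal O_{V_j}^{\oplus r}$. On $V_j$ the first basis section is a nowhere vanishing definable section of $E|_{V_j}$, that is, a definable section $\widetilde s_j\in H^0(X_{V_j},\mathcal L(n))$ restricting to a nonzero section of $\mathcal L(n)|_{X_t}$ for every $t\in V_j$. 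As the fibres are integral, $\widetilde s_j$ is fibrewise a nonzerodivisor, so its vanishing locus $\mathcal D_j\subset X_{V_j}$ is a relative effective Cartier divisor, flat over $V_j$, with constant fibrewise Hilbert polynomial $Q$ and $\mathcal O_{X_t}(\mathcal D_{j,t})\cong\mathcal L(n)|_{X_t}$.

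The family $\mathcal D_j/V_j$ is classified by a holomorphic map $g_j\colon V_j\to(\mathrm{Div}^Q_{X/S})^{\mathrm{an}}$, where $\mathrm{Div}^Q_{X/S}\subset\mathrm{Hilb}^Q_{X/S}$ is the open subscheme of relative effective Cartier divisors, and the crucial claim is that $g_j$ is \emph{definable}. I would deduce this from the definable algebraic geometry in the style of \cite{PS09,BBT23}: a finite type $\C$-scheme such as $\mathrm{Div}^Q_{X/S}$ carries a canonical definable analytification, and, the Hilbert scheme being cut out in a product of Grassmannians by the degree bounded equations of Gotzmann persistence, the map classifying a definable flat family of closed subspaces of $X_{(-)}$ is obtained by reading off, definably, the fibrewise homogeneous ideals in a fixed degree — so $\mathcal D_j$ definable forces $g_j$ definable. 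Now $D\mapsto\mathcal O_X(D)$ is an algebraic $S$-morphism $\mathrm{Div}^Q_{X/S}\to\mathrm{Pic}(X/S)$, and twisting by the algebraic line bundle $\mathcal O_X(n)$ is translation by an algebraic section $S\to\mathrm{Pic}(X/S)$ of the commutative $S$-group scheme $\mathrm{Pic}(X/S)$. Composing $g_j$ with $D\mapsto\mathcal O_X(D)$ and then with this translation gives, on $\C$-points, $t\mapsto[\mathcal O_{X_t}(\mathcal D_{j,t})(-n)]=[\mathcal L|_{X_t}]=\sigma_{\mathcal L}(t)$; as a composition of a definable map with algebraic morphisms and the group law of $\mathrm{Pic}(X/S)$ it is definable. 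Since the $V_j$ form a finite definable cover of $U$, $\sigma_{\mathcal L}$ is definable.

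The main obstacle is precisely the definability of the classifying map $g_j$: everything else is either formal manipulation inside the relative Picard scheme or the classical analytic Grauert and base change theory, which — unlike its definable analogue, whose failure is the very point of this paper — is freely available. Making precise that Hilbert (equivalently $\mathrm{Div}$ or $\mathrm{Quot}$) schemes are ``definable'' and represent definable flat families is what rests on their explicit, degree bounded description; a secondary point is the uniform choice of the single twist $n$, which uses the finiteness of $\mathrm{Pic}^P(X/S)$ over $S$ and lets one avoid working with an infinite increasing family $\{U_n\}_{n>0}$ of definable opens.
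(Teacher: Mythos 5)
There is a genuine gap at the step you yourself single out as the crux: the definability of the classifying map $g_j\colon V_j\to \mathrm{Div}^Q_{X/S}$. Knowing that $\mathcal D_j\subset X_{V_j}$ is a definable analytic subspace does not formally force $g_j$ to be definable. To read off the point of the Hilbert scheme inside its Grassmannian (Gotzmann) embedding you must produce, definably in $t$, the subspace $H^0(X_t,\mathcal I_{\mathcal D_{j,t}}(d))\subset H^0(X_t,\mathcal O_{X_t}(d))$ for a fixed large $d$. Since $\mathcal I_{\mathcal D_j}\cong \mathcal L^{-1}(-n)$, this subspace is $\tilde s_{j,t}\cdot H^0(X_t,\mathcal L_t^{-1}(d-n))$, so its definable variation in $t$ is exactly a definable-coherence statement for $f_*\mathcal L^{-1}(d-n)$. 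The hypothesis of the proposition only grants definable coherence of $f_*\mathcal L(n)$ for \emph{positive} twists of $\mathcal L$ itself, says nothing about twists of $\mathcal L^{-1}$, and the entire point of the paper is that such direct-image definability can fail; in the application ($\mathcal L$ coming from a family of rank-one local systems) $\mathcal L^{-1}$ is of exactly the same problematic type as $\mathcal L$. So ``$\mathcal D_j$ definable forces $g_j$ definable'' is an unproved assertion of the same nature as the theorem being contradicted, and your parenthetical ``equivalently $\mathrm{Div}$ or $\mathrm{Quot}$'' hides the asymmetry that makes one of the two work and the other not.

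The fix is the paper's (i.e.\ Simpson's) route: instead of the divisor of one section, use the full fibrewise evaluation $\oplus_{\{1,\dots,b\}}\mathcal O_{X_U}(-n)\to \mathcal L$ coming from a definable presentation of $f_*\mathcal L(n)$, which gives a point of a Quot scheme; its Grassmannian embedding for a second twist $m\gg n$ is the quotient $\oplus_{\{1,\dots,b\}} f_*\mathcal O_{X_U}(m-n)\twoheadrightarrow f_*\mathcal L(m)$, and now both terms are controlled --- the left one is algebraic and the right one is definable coherent by the hypothesis applied to $m$. This also shows that your claim to use only a single value $n$ of the hypothesis is unsubstantiated: the argument genuinely needs two twists, one ($n$) to generate $\mathcal L$ and one ($m$) to embed the classifying space into a Grassmannian whose coordinates are positive twists of $\mathcal L$ rather than of its ideal sheaf. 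The remaining ingredients of your write-up (boundedness via $\mathrm{Pic}^P(X/S)$, uniform Serre vanishing, definable local freeness of $f_*\mathcal L(n)$, algebraicity of $\mathrm{Div}\to\mathrm{Pic}$ and of the twist) are fine and match the paper's Lemmas on Serre vanishing and local freeness.
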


Note that the image of $\sigma_{\mathcal L}$ is contained in an open and closed subscheme of $ \mathrm{Pic}(X/S)$ which is of finite type over $S$, given by fixing finitely many Hilbert polynomials, so the notion of definability
makes sense here.  The idea of the proof of Proposition~\ref{prop:defsec} is simply to
copy \cite[Sec.~5]{Sim94} and add the word definable when needed.
We are going to present a concise version of Simpson's construction.

\begin{proof}
Assume that the sheaves  $f_* \mathcal L (n)$ are definable coherent for all $n>0$. The  canonical  map
  \[
    (f_* \mathcal L (n))^{\rm an} \xrightarrow{\sim} f_* \mathcal L^{\rm an} (n)
  \]
is an isomorphism by Lemma~\ref{lem:gagash}.
  By Grauert's Base Change Theorem~\cite[Sec.~10.5.5]{GR84} and Lemma~\ref{lem:servan} there exists
  an integer $n$ such that    $$f^* f_* \mathcal L^{\rm an} (n)\to \mathcal L^{\rm an} (n) $$
is surjective.
As  $f_* \mathcal L (n)$ is definable coherent, after shrinking $U$ there is a surjection $$ \oplus_{\{1,\ldots,b\}} \mathcal O_{X_U}  \to f_*
\mathcal L (n)$$
which thus induces a surjection  $$ \oplus_{\{1,\ldots, b\}} \mathcal O_{X_U}(-n) \to \mathcal L .$$
Let $\mathcal K$ be the kernel of this map. Note that $\mathcal K$ is definable
coherent~\cite[Lem.~2.14]{BBT23}.
Again by Grauert's Base Change Theorem and Lemma~\ref{lem:servan} there exists an integer
$m$ such that
\[
R^if_* \mathcal K^{\rm an} (m) = R^if_* \mathcal O^{\rm an}_{X_U}(m-n) = R^if_* \mathcal L^{\rm an} (m) = 0
\]
for all $i>0$. This implies that the analytification of
\[
0\to f_* \mathcal K (m) \to   \oplus_{\{1,\ldots, b\}} f_* \mathcal O_{X_U} (m-n) \to f_* \mathcal L (m ) \to 0
\]
is an exact sequence of locally free $\mathcal O_U^{\rm an}$-sheaves, so this is a short
exact sequence of definable locally free coherent sheaves itself
by~\cite[Thm.~2.27]{BBT23} and  Lemma~\ref{lem:locfree}.

Let $r$ be the rank of
$ f_* \mathcal L (m )$.
This short exact sequence induces a definable analytic map to a
Grassmannian classifying codimension $r$ locally free subsheaves  \[ \tilde \sigma_{\mathcal L}\colon U\to \mathrm{Grass}_{U}(  \oplus_{\{1,\ldots, b\}} f_* \mathcal O_{X_U}(m-n), r).\]
By the general construction of moduli spaces of coherent algebraic sheaves,
there is an open subscheme $\mathrm{Quot}'$ of finite type over $S$ of Grothendieck's Quot scheme classifying
coherent quotients of  $ \oplus_{\{1,\ldots, b\}}  \mathcal O_{X}(-n)$. Here
$\mathrm{Quot}'$ classifies line bundle quotients with the same Hilbert polynomial as the
line bundles
$\mathcal L_s$ (assume $S$ is connected for simplicity).

For suitable $m$  the above construction provides a locally closed immersion
\[ \mathrm{Quot}'\hookrightarrow  \mathrm{Grass}_S ( \oplus_{\{1,\ldots, b\}}  f_* \mathcal
O_X (m-n), r). \]
Consider the canonical scheme morphism  $q\colon \mathrm{Quot}' \to \mathrm{Pic}(X/S)$. As $\sigma_{\mathcal L}=
q\circ \tilde \sigma_{\mathcal L}$ is the composition of definable maps, we finally conclude that $\sigma_{\mathcal L}$ is definable.
\end{proof}

The following lemma is a standard variant of Serre's vanishing theorem. It is a
consequence of a combination of Thm.~III.5.2 and  of   Thm.~III.12.11(b) in~\cite{Har77}.

\begin{lem}\label{lem:servan}
For a bounded family of line bundles $\mathcal L^{(\lambda)}_{s_\lambda}$ on fibers $X_{s_\lambda}$, $\lambda\in \Lambda$,
there exists an $n>0$ such that
$
\mathcal L^{(\lambda)}_{s_\lambda} (n)
$
is generated by global sections and such that $ H^i(X_{s_\lambda}, \mathcal L^{(\lambda)}_{s_\lambda} (n))$
vanishes for all $i>0$.
\end{lem}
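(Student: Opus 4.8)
The plan is to reduce everything to the case of a single fiber via the general theory of bounded families and then cite the standard cohomological vanishing statements. First I would recall what a bounded family of line bundles means: there is a scheme $T$ of finite type over the base, a line bundle $\mathcal{L}$ on $X\times_S T$ (after base change to $T$), and a map $T\to S$ sending each $t\mapsto s_{\lambda}$, such that every $\mathcal{L}^{(\lambda)}_{s_\lambda}$ in our family occurs as a fiber $\mathcal{L}|_{X_t}$ for some $t\in T$. Passing to an affine cover of $T$ and then to the total space, we may assume $T$ is affine and reduce to considering the single line bundle $\mathcal{L}$ on the projective scheme $X\times_S T\to T$.

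Next I would invoke Serre's theorems. By Theorem~III.5.2 of~\cite{Har77}, applied to the projective morphism $X\times_S T\to T$ and the twisting by $\mathcal{O}(1)$ coming from a chosen projective embedding over $S$, there is an integer $n_0$ such that for all $n\ge n_0$ the sheaf $\mathcal{L}(n)$ is generated by global sections over $T$ and $R^i\pi_*(\mathcal{L}(n))=0$ for all $i>0$, where $\pi$ denotes the structure morphism to $T$. Then by the Theorem on cohomology and base change, Theorem~III.12.11(b) of~\cite{Har77}, the vanishing of the higher direct images together with the fact that they remain zero (hence the base change maps are isomorphisms) forces $H^i(X_t,\mathcal{L}|_{X_t}(n))=0$ for all $t\in T$ and all $i>0$; similarly global generation of $\pi_*(\mathcal{L}(n))$ together with base change gives global generation of $\mathcal{L}|_{X_t}(n)$ on each fiber. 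Applying this with $t$ ranging over the points realizing the $s_\lambda$ gives the claim with the uniform $n=n_0$.

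The only real subtlety — and the step I would be most careful about — is making sure the twist $\mathcal{O}(n)$ is chosen compatibly across the whole family, i.e. that there is one relatively ample line bundle on $X$ over $S$ whose restrictions give the $\mathcal{O}(1)$ on each $X_{s_\lambda}$; this is where "bounded" does the work, since boundedness is precisely what guarantees the family is parametrized by a finite-type scheme over $S$ and hence the twisting sheaf can be pulled back from a single relatively ample sheaf on $X$. Once that is in place, the remaining arguments are the verbatim statements of the two cited results from Hartshorne, so no further computation is needed.
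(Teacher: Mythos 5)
Your proposal is correct and follows essentially the same route as the paper, which proves the lemma in one line by citing exactly the combination of Thm.~III.5.2 and Thm.~III.12.11(b) of~\cite{Har77} that you spell out (Serre vanishing and global generation applied to the total family over a finite-type parameter scheme $T$ furnished by boundedness, then cohomology and base change to pass to the fibers). Your added care about choosing the twisting sheaf compatibly via a single relatively ample sheaf on $X$ over $S$ is a reasonable elaboration of what the paper leaves implicit.
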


The proof of the following Lemma~\ref{lem:gagash}  is the only place where we use that our
o-minimal structure expands $\mathbb
  R_{\rm an}$. It  seems  possible   that  the lemma is true more generally.

\begin{lem}\label{lem:gagash}
  Let $\mathcal F$ be a definable coherent sheaf on $X$. Then the canonical map
  \[
\psi\colon (f_* \mathcal F)^{\rm an} \xrightarrow\sim f_* (\mathcal F^{\rm an})
\]
is an isomorphism.
\end{lem}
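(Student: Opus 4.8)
The plan is to prove Lemma~\ref{lem:gagash} by reducing to the analogous statement for the underlying classical complex analytic spaces, where it is the usual GAGA-type compatibility of pushforward along a projective morphism with analytification. The essential input is that, since our o-minimal structure expands $\mathbb R_{\rm an}$, a definable coherent sheaf on a definable complex analytic space \emph{is} coherent when one forgets the definable structure, and moreover the formation of sections over definable opens is controlled by the classical sheaf; the point of this lemma is to upgrade the classical isomorphism $(f_* \mathcal F)^{\rm cl} \xrightarrow{\sim} f_*(\mathcal F^{\rm cl})$ to the definable category, i.e.\ to show the definable coherent sheaf $(f_*\mathcal F)$ really computes the analytic pushforward on definable opens.

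First I would recall that, because $f\colon X\to S$ is projective and definable, one has a factorization $X\hookrightarrow \mathbb P^N_S \to S$ through a definable closed immersion, so by the definable Grauert theorem for finite morphisms \cite[Prop.~2.52]{BBT23} together with \v{C}ech computations one reduces to the case $X=\mathbb P^N_S$ and $\mathcal F$ a twist $\mathcal O(d)$, where both sides of $\psi$ can be written out explicitly via the standard affine cover; here definability of the transition data is visible from the algebraic cocycles. Alternatively — and this is probably cleaner — I would use the presentation already exploited in the proof of Proposition~\ref{prop:defsec}: locally on $S$ one resolves $\mathcal F$ by sums of $\mathcal O_X(-n_j)$ with $n_j$ large, chosen (via Lemma~\ref{lem:servan} and Grauert base change) so that all higher direct images of the terms and of the syzygies vanish; then $f_*$ is exact on the resolution, the analytification functor is exact on definable locally free sheaves, and the map $\psi$ becomes a map of finite complexes which is an isomorphism term by term because it is so for each $\mathcal O_X(-n_j)$.

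So the heart of the matter is the case $\mathcal F = \mathcal O_{\mathbb P^N_S}(d)$: here $f_*\mathcal O(d)$ is the algebraic (hence definable) locally free sheaf $\mathrm{Sym}^d(\mathcal O_S^{N+1})\otimes(\text{line bundle factor})$ — explicitly, the sheaf with basis the degree-$d$ monomials — and its analytification is literally the same free module, while $f_*(\mathcal O(d)^{\rm an})$ is the sheaf of analytic sections, which by the classical projective-space computation (Cartan's theorems / the classical Grauert theorem for $\mathbb P^N$) is also freely generated by the same monomials; the canonical map $\psi$ sends generators to generators, so it is an isomorphism. The one thing one must check is that the identification is \emph{definable}, not merely analytic — but the generators are given by semi-algebraic (polynomial) data in the homogeneous coordinates, so this is immediate.

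The main obstacle I anticipate is not any single hard estimate but rather a bookkeeping one: making the reduction to $\mathbb P^N_S$ compatibly with the definable structure, i.e.\ checking that the \v{C}ech nerve of the standard definable affine cover of $\mathbb P^N_S$, the definable closed immersion $X\hookrightarrow\mathbb P^N_S$, and the resolution of $\mathcal F$ by twists can all be chosen \emph{definably} and \emph{locally over $S$}, and that the relevant higher direct images vanish definably (which one extracts from Grauert base change exactly as in Proposition~\ref{prop:defsec}). Once that scaffolding is in place, the isomorphism statement for each twist $\mathcal O(d)$ is the classical fact, and the only genuinely new ingredient — the reason $\mathbb R_{\rm an}$ enters — is that over a definable open $U\subset S$ the definable analytic functions agree with the classical holomorphic ones on the (bounded) coordinate charts, so that a classical analytic isomorphism of coherent sheaves whose matrix entries are definable is a definable isomorphism; this is precisely where one invokes that the o-minimal structure expands $\mathbb R_{\rm an}$.
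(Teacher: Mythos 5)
There is a genuine gap, and it sits exactly at the step you describe as ``bookkeeping.'' Your plan requires a resolution of $\mathcal F$, definably and locally over $S$, by sums of twists $\mathcal O_X(-n_j)$. Classically such a resolution exists by Serre/Grauert, but to get a \emph{definable} surjection $\oplus \mathcal O_{X_U}(-n)\to \mathcal F$ you must produce finitely many \emph{definable} sections of $\mathcal F(n)$ over $X_U$ that generate it fiberwise. The classical generating sections are merely analytic, and the assertion that they can be replaced by definable ones is essentially the surjectivity half of the very lemma you are proving. In Proposition~\ref{prop:defsec} this step is extracted from the \emph{hypothesis} that $f_*\mathcal L(n)$ is definable coherent --- a hypothesis absent from Lemma~\ref{lem:gagash}, and indeed the whole point of the paper is that it can fail. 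So your reduction to $\mathcal O_{\mathbb P^N_S}(d)$ (where the statement is indeed easy, since both sides are free on the monomial basis) is circular as written. Relatedly, you mislocate the role of $\mathbb R_{\rm an}$: the observation that an analytic isomorphism with definable matrix entries is definable holds in any o-minimal structure and is not where the hypothesis is used.

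The paper's proof avoids all of this machinery. It checks the map on stalks at $s\in S$: injectivity is clear, and for surjectivity one takes an analytic section $\tilde\theta\in\mathcal F^{\rm an}(X_V)$, shrinks to a relatively compact definable open $V'\Subset V$ (so that $X_{V'}\Subset X_V$ by properness of $f$), and observes that the restriction $\tilde\theta|_{X_{V'}}$ is then automatically \emph{definable}: locally this reduces to the structure sheaf on a basic definable chart in $\C^n$, where a holomorphic function restricted to a relatively compact definable set is a restricted analytic function, hence definable by the very definition of $\mathbb R_{\rm an}$ (Claim~\ref{claim:overcong}). That relative-compactness argument is the one genuinely new ingredient, it is what you would also need to make your resolution step work, and it renders the reduction to projective space unnecessary.
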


\begin{proof}
  It suffices to show that for any $\C$-point $s\in S$ the induced map $\psi_s$ on stalks
  is an isomorphism. Its injectivity is clear. For surjectivity consider
  $\theta\in (f_* \mathcal F^{\rm an})_s$. Say $\theta$ is induced by
  $\tilde \theta \in \mathcal F^{\rm an}( X_V)$, where $V\subset S$ is a definable analytic open
  subset containing $s$. Let $V'\Subset V$ be a relatively compact definable open subset
  containing $s$. Then by Claim~\ref{claim:overcong} $\tilde \theta|_{X_{V'}}$ is induced by
  an element in $\mathcal F (X_{V'})$, which gives rise to a preimage of $\theta$ in
  $(f_* \mathcal F)_s= (f_* \mathcal F)^{\rm an}_s $.

\begin{claim}\label{claim:overcong}
For a definable coherent sheaf $\mathcal G$ on a definable analytic space $V$ and for $V'\Subset V$
a relatively compact definable open subset the image of the restriction map $\mathcal G^{\rm an}(V)\to \mathcal
G^{\rm an}(V') $  has values in $\mathcal G(V')\subset  \mathcal
G^{\rm an}(V')  $.
\end{claim}

The conclusion of the claim is local, so we can assume without loss of generality that
there is a surjection $\psi\colon \mathcal O_V^b\to \mathcal G$ and that we have to show
the claim for an element of the form $\psi(\zeta)$. In other words we are reduced to  showing 
the claim for  $\mathcal G$ the structure sheaf and for $V\subset \mathbb C^n$  a basic
definable complex analytic space~\cite[Def.~2.19]{BBT23}. In this case the claim is clear
by the definition of $\mathbb R_{\rm an}$.
\end{proof}

\begin{lem}\label{lem:locfree}
If $\mathcal F$ is a definable coherent sheaf on a definable analytic space $V$ such that
$\mathcal F^{\rm an}$ is locally free, then $\mathcal F$ is locally free.
\end{lem}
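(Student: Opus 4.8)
The plan is to reduce the statement to the corresponding fact about the stalk $\mathcal F_x$ at a point and then invoke the faithful flatness of the comparison map $\mathcal O_{V,x}\to\mathcal O_{V,x}^{\rm an}$. Since local freeness is a local condition, fix a $\C$-point $x\in V$; it suffices to produce a definable open $U\ni x$ with $\mathcal F|_U$ free. As $\mathcal F$ is definable coherent, $\mathcal F_x$ is a finitely generated module over the Noetherian local ring $R:=\mathcal O_{V,x}$, and on stalks analytification is base change: $(\mathcal F^{\rm an})_x=\mathcal F_x\otimes_R R'$ with $R':=\mathcal O_{V,x}^{\rm an}$. By hypothesis $(\mathcal F^{\rm an})_x$ is a free $R'$-module, say of rank $r$; since $R\to R'$ is a local homomorphism inducing the identity on the residue field $\C$, we get $\dim_\C\mathcal F_x/\mathfrak m_R\mathcal F_x=\dim_\C(\mathcal F^{\rm an})_x/\mathfrak m_{R'}(\mathcal F^{\rm an})_x=r$.

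Next I would descend freeness from $R'$ to $R$. Lifting a $\C$-basis of $\mathcal F_x/\mathfrak m_R\mathcal F_x$ to elements $f_1,\dots,f_r\in\mathcal F_x$, Nakayama shows that they generate $\mathcal F_x$, so they define a surjection $\alpha\colon R^{r}\surj\mathcal F_x$. Applying $-\otimes_R R'$ turns $\alpha$ into a surjection $(R')^{r}\surj(\mathcal F^{\rm an})_x$ between free $R'$-modules of the same rank $r$, hence an isomorphism; therefore $\ker(\alpha)\otimes_R R'=\ker(\alpha\otimes_R R')=0$. The map $R\to R'$ is flat — this is part of the foundational theory of \cite[Sec.~2]{BBT23} and underlies \cite[Thm.~2.27]{BBT23} — and a flat local homomorphism is faithfully flat, so $\ker(\alpha)=0$. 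Thus $\mathcal F_x\cong R^{r}$ is free.

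Finally I would spread this isomorphism out over a neighborhood. It is induced by a morphism $\tilde\alpha\colon\mathcal O_U^{r}\to\mathcal F|_U$ of definable coherent sheaves on some definable open $U\ni x$. The analytification $\tilde\alpha^{\rm an}$ is an isomorphism on stalks at $x$, hence an isomorphism of coherent analytic sheaves in a neighborhood of $x$ (the coherent sheaves $\ker(\tilde\alpha^{\rm an})$ and $\mathrm{coker}(\tilde\alpha^{\rm an})$ have closed support avoiding $x$); after shrinking $U$, $\tilde\alpha^{\rm an}$ is an isomorphism on all of $U$. By \cite[Lem.~2.14]{BBT23} the sheaves $\ker(\tilde\alpha)$ and $\mathrm{coker}(\tilde\alpha)$ are definable coherent on $U$, and their analytifications vanish; by faithful flatness again — equivalently by \cite[Thm.~2.27]{BBT23} — they vanish, so $\tilde\alpha$ is an isomorphism and $\mathcal F|_U\cong\mathcal O_U^{r}$ is free, as desired.

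I do not expect a genuine obstacle here: the argument is essentially bookkeeping resting on two inputs from \cite{BBT23}, the faithful flatness of $\mathcal O_{V,x}\to\mathcal O_{V,x}^{\rm an}$ and the stability of definable coherence under kernels and cokernels of morphisms. The only step requiring a little care is the passage back and forth between stalks and definable open neighborhoods, and this is precisely what the faithful flatness — or the sheaf-level statement \cite[Thm.~2.27]{BBT23} — is meant to render harmless. One can also phrase the whole proof without stalks: pick a definable surjection $\mathcal O_U^{b}\surj\mathcal F|_U$ near $x$, use freeness of $(\mathcal F^{\rm an})_x$ to select $r$ of the $b$ generators whose images form a local analytic frame of $\mathcal F^{\rm an}$, and conclude by \cite[Thm.~2.27]{BBT23} that the resulting morphism $\mathcal O_U^{r}\to\mathcal F|_U$, an isomorphism after analytification, is an isomorphism.
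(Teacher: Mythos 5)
Your argument is correct in substance and rests on the same two inputs as the paper's proof: the possibility of choosing $r$ out of $b$ local generators that form a frame, and the fact that analytification detects isomorphisms of definable coherent sheaves (equivalently, faithful flatness of $\mathcal O_{V,x}\to\mathcal O_{V,x}^{\rm an}$, together with \cite[Lem.~2.14]{BBT23} for definability of kernels, cokernels and their supports). The difference is organizational: the paper never descends to stalks or mentions Nakayama explicitly. It starts from a \emph{finite} definable covering $V=\cup_i V_i$ with surjections $\psi_i\colon\oplus_{\{1,\dots,b\}}\mathcal O_{V_i}\to\mathcal F|_{V_i}$, and for each subset $W\subset\{1,\dots,b\}$ of cardinality $r_i$ defines $V_{i,W}$ as the complement of the supports of $\mathrm{Ker}$ and $\mathrm{Coker}$ of $\psi_i|_{\oplus_W\mathcal O_{V_i}}$; these finitely many definable opens cover $V$ and trivialize $\mathcal F$. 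Your closing remark (``select $r$ of the $b$ generators whose images form a local analytic frame'') is exactly this argument localized at one point. The one place where your write-up needs a touch more care is finiteness: in this definable setting local freeness should be witnessed by a \emph{finite} definable covering (compare the paper's definition of a definable line bundle), and your construction produces one neighborhood per point, a priori infinitely many. This is easily repaired — your neighborhoods can all be taken from the finite list of loci $V_{i,W}$ attached to the finitely many surjections $\psi_i$ and subsets $W$ — but as stated the pointwise conclusion ``every point has a definable neighborhood on which $\mathcal F$ is free'' is formally weaker than what the lemma is used for. What your stalk-theoretic route buys is a cleaner isolation of the commutative-algebra content (Nakayama plus faithful flatness of $R\to R'$); what the paper's covering-based formulation buys is that the finiteness of the trivializing cover comes for free.
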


\begin{proof}
  We find a finite definable open covering $V=\cup_{i\in I} V_i$ together with
   surjective morphisms   $\psi_i\colon \mathcal
\oplus_{\{1, \ldots, b\}} \mathcal O_{V_i}\to\mathcal F|_{V_i}$. We can assume that $\mathcal F|_{V_i}^{\rm an}$ is locally
constant of rank $r_i$. For
a subset $W\subset \{ 1, \ldots , b\}$ of cardinality $r_i$,   let $V_{i,W}\subset V_i$ be the
 open subset over which   $\psi_i|_{ \oplus_W \mathcal{O}_{V_i}} $ is an isomorphism.
  Then $V_{i,W}$ is the complement of the union of the  supports of $\mathrm{Ker} (\psi_i)$ and ${\rm Coker} (\psi_i)$. By
\cite[Lem.2.14]{BBT23} it is thus definable.  Then
 $V=\cup_{i\in I, W} V_{i,W}$ is a finite definable open covering such that $\mathcal F|_{
   V_{i,W}}$ is free.
\end{proof}

\subsection{The character variety and definable local systems of rank one} \label{subsec:fam}

Let $Y$ be a complex projective variety.
We fix an arbitrary o-minimal structure.
Its character variety
\ga{}{ {\rm Char}(Y)={\rm Hom}(\pi^{\rm ab}_1(Y), \G_m) \notag}
is a diagonalizable linear algebraic group  which classifies   complex local systems of rank one on $Y$.

For a definable analytic space $U$ we write
\[
\mathrm M (U ) := \left\{ \!\!
    \begin{tabular}{l} 
       definable   $f^{-1}\mathcal O_U$-sheaves on $X_U$ such that there \\ 
          exists a finite definable open covering
$Y=\cup_{i\in I} V_i$ \\ with  $\mathbb L|_{V_i\times U}\cong  f^{-1}\mathcal O_U|_{V_i\times U}$
    \end{tabular} \!\!
\right\}  / \text{iso.}
\]
Here $X_U=Y\times U$ and $f\colon  X_U\to U$ is the projection. Note that $  \mathrm M (U
)$ is an ad hoc definable variant of the framed moduli of local
systems~\cite[Lem.~7.3]{Sim94}, which is sufficient for our purpose.

\begin{prop}\label{prop:charvardef}
The canonical map which associates  to an element of $M(U)$ the family of complex
representation of $\pi_1^{\rm ab}(Y)$ parametrized by $U$ 
\[
\Psi\colon \mathrm M( U )\xrightarrow\sim \mathrm{Hom}(U,  {\rm Char}(Y) )
\]
is bijective. Here the codomain consists of all definable analytic maps.
\end{prop}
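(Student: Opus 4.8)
The plan is to construct the inverse of $\Psi$ explicitly and check that both composites are the identity. First I would recall that an element $\mathbb L\in\mathrm M(U)$ is, by definition, a definable $f^{-1}\mathcal O_U$-sheaf on $X_U=Y\times U$ which becomes free of rank one after restriction to each piece $V_i\times U$ of a finite definable covering of $Y$. Since the transition functions glueing these local trivializations are definable sections of $(f^{-1}\mathcal O_U)^\times$, and since $f^{-1}\mathcal O_U$ is by construction constant along the $Y$-direction, the cocycle $(g_{ij})$ defining $\mathbb L$ can be rigidified: locally on $U$ the transition functions are locally constant in the $Y$-variable, hence (after refining the covering of $Y$ so that each $V_i$ and each $V_i\cap V_j$ is connected) they depend only on $U$. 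This exhibits $\mathbb L$ as obtained from a class in $H^1$ of $Y$ with coefficients in the constant-along-$Y$ sheaf $f^{-1}\mathcal O_U^\times$, i.e.\ from a definable analytic map $U\to H^1(Y,\mathcal O_{\mathrm{an}}^\times)$ landing in the torsion-free part classified by $\mathrm{Char}(Y)={\rm Hom}(\pi_1^{\rm ab}(Y),\G_m)$. This defines $\Psi$ on objects; one checks it is independent of the chosen trivializing covering by the usual cocycle-refinement argument, and that it is definable because all the data (the $g_{ij}$, the connected components, the induced map to the diagonalizable group $\mathrm{Char}(Y)$) are definable.

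For the inverse, given a definable analytic map $\rho\colon U\to\mathrm{Char}(Y)$ I would build $\mathbb L_\rho$ as follows. Writing $\mathrm{Char}(Y)$ as a closed subgroup of $\G_m^N$ via a finite set of generators of $\pi_1^{\rm ab}(Y)$, the map $\rho$ is a tuple of nowhere-vanishing definable analytic functions on $U$ subject to the relations cutting out $\mathrm{Char}(Y)$. Choose a finite definable covering $Y=\cup_i V_i$ by connected, simply-connected-enough definable opens (e.g.\ a definable good cover, which exists by definable triangulability / the results imported in \cite[Sec.~2]{BBT23}), pick base paths and express the monodromy cocycle $(c_{ij})$ of the universal rank-one local system with values in $\pi_1^{\rm ab}(Y)$; then $g_{ij}:=\rho(c_{ij})$ is a definable Čech $1$-cocycle on $Y\times U$ valued in $(f^{-1}\mathcal O_U)^\times$, and $\mathbb L_\rho$ is the line bundle it glues. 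One verifies $\mathbb L_\rho\in\mathrm M(U)$ by construction, that the assignment $\rho\mapsto\mathbb L_\rho$ is well-defined up to isomorphism (different choices of cover/paths change the cocycle by a coboundary), and that $\Psi(\mathbb L_\rho)=\rho$ and $\mathbb L_{\Psi(\mathbb L)}\cong\mathbb L$, both of which reduce to the standard identification $H^1(Y,\underline{A})={\rm Hom}(\pi_1^{\rm ab}(Y),A)$ for an abelian group $A$, now carried out with definable cochains.

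I expect the main obstacle to be the first paragraph: namely showing that a definable relative cocycle on $Y\times U$ valued in the constant-along-$Y$ sheaf $f^{-1}\mathcal O_U^\times$ can, definably and after a definable refinement, be replaced by one whose transition functions genuinely do not depend on $Y$, so that it comes from a definable map $U\to\mathrm{Char}(Y)$ rather than from something larger. Concretely this is the statement that the natural map from definable $U$-families of rank-one local systems on $Y$ to definable sections of the relative $H^1$ is injective and surjective onto the locally-constant part; the topological input (connectedness of the $V_i$, contractibility up to homotopy, the comparison $H^1_{\mathrm{sing}}(Y;\Z)\to H^1(Y,\mathcal O^\times)$) is classical, and the only real work is checking that each step — choosing connected components, the path-lifting defining the monodromy cocycle, the coboundary trivializations — stays inside the o-minimal structure, which it does because finite covers, connected components, and zero/support loci of definable coherent sheaves are definable by \cite[Lem.~2.14]{BBT23}. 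Everything else is bookkeeping in Čech cohomology with definable coefficients.
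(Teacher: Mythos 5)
Your overall architecture is the paper's: you \v Cech-ify $\mathbb L$ with respect to a finite definable cover $\mathcal V=(V_i)$ of $Y$ by contractible opens, observe that a definable cocycle valued in $(f^{-1}\mathcal O_U)^\times$ is the same thing as a definable analytic map from $U$ to the diagonalizable algebraic group $\check Z^1(\mathcal V,\C^\times)$ of $\C^\times$-valued \v Cech cocycles, and you invert $\Psi$ by composing a character with a fixed universal cocycle. Your explicit section $\chi\mapsto(\chi(c_{ij}))$ built from the $\pi_1^{\rm ab}(Y)$-valued monodromy cocycle is a concrete instance of the algebraic section ${\rm Char}(Y)\to\check Z^1$ that the paper extracts abstractly from the splitting of the sequence $\check C^0\xrightarrow{d}\check Z^1\to{\rm Char}(Y)\to 1$ of diagonalizable groups; that difference is cosmetic. (Two small slips: the target of $\Psi$ is $H^1(Y,\C^\times)\cong{\rm Hom}(H_1(Y,\Z),\C^\times)$ with \emph{locally constant} coefficients, not $H^1(Y,\mathcal O^\times_{\rm an})$, and there is no need to restrict to a ``torsion-free part''.)

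There is, however, one genuine gap, precisely at the step you wave through as ``the coboundary trivializations \dots stay inside the o-minimal structure \dots because finite covers, connected components, and zero/support loci of definable coherent sheaves are definable.'' That justification does not apply. Both for injectivity of $\Psi$ and for the verification $\mathbb L_{\Psi(\mathbb L)}\cong\mathbb L$ you must show: if two definable cocycles $g,g'\colon U\to\check Z^1$ have the same image in ${\rm Hom}(U,{\rm Char}(Y))$, then $g(g')^{-1}=d\circ c$ for a \emph{definable} $0$-cochain $c\colon U\to\check C^0$. Pointwise cohomologousness only gives fiberwise isomorphisms $\mathbb L_u\cong\mathbb L'_u$ with no control on how the trivializing cochain varies with $u$; to promote this to an isomorphism of definable sheaves over $U$ you need a definable lift of $g(g')^{-1}\colon U\to\im(d)$ along the surjection $d\colon\check C^0\to\im(d)$. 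This is where the paper's proof does its real work: it notes that the sequence of diagonalizable groups splits algebraically after passing to identity components, so in particular there is an algebraic section $\im(d)\to\check C^0$, and composing with it produces the required definable $c$. Support loci of definable coherent sheaves (\cite[Lem.~2.14]{BBT23}) are irrelevant here. Once you insert this splitting argument (or an explicit section at the level of $0$-cochains, in the spirit of your $(c_{ij})$), your proof coincides with the paper's.
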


\begin{proof}
  Let $\mathcal V= (V_i)_{i\in I}$ be a finite definable open covering of
  $Y$. By~\cite[Ch.~8]{vdD98} we can find such a covering with all $V_i$ contractible. Let
  $\check Z^1=\check Z^1(\mathcal V, \mathbb C^\times )$ be the group of \v Cech
  $1$-cocycles, which has the structure of a complex diagonalizable linear algebraic
  group. Similarly, $\check C^0=\check C^0(\mathcal V, \mathbb C^\times )$ has the
  structure of an algebraic  torus, and we obtain an exact
  sequence of abelian linear algebraic groups
\begin{equation}\label{eq:shdiaggr}
\check C^0 \xrightarrow{d} \check Z^1 \to {\rm Char}(Y) \to 1
\end{equation}
where the map on the right associates to a local system its monodromy representation.
The canonical map
\begin{equation}\label{eq:homisodef}
 \mathrm{Hom}(U,  \check Z^1   )  /   \mathrm{Hom}(U,  \check C^0   ) \to \mathrm M (U )
\end{equation}
which associates a locally free  $ f^{-1}\mathcal O_U$-sheaf of rank one to a \v Cech cocycle in
\[
  \mathrm{Hom}(U,  \check Z^1(\mathcal V,  \mathbb C^\times )) =  \check Z^1(( V_i\times
  U)_{i\in I },   f^{-1} \mathcal O_U^\times )
\]
is bijective in view of the standard correspondence between the first \v Cech cohomology
and the set of isomorphism classes of locally free sheaves. Surjectivity in~\eqref{eq:homisodef} holds as any locally free definable $ f^{-1}\mathcal O_U$-sheaf
 on $X_U$ is trivial on $V_i\times U$ for all $i\in I$ as $V_i$ is simply connected.

Because the exact sequence~\eqref{eq:shdiaggr} splits as a sequence of algebraic tori if we pass to the connected components of
the neutral element, there are algebraic sections $ {\rm Char}(Y) \to  \check Z^1$ and $\im
(d) \to  \check C^0$  of the maps in~\eqref{eq:shdiaggr}.  Via these sections we deduce that the  sequence of groups of definable morphisms
\[
  \mathrm{Hom}(U,   \check C^0 )\xrightarrow{d} \mathrm{Hom}(U,  \check Z^1) \to  \mathrm{Hom}(U, {\rm Char}(Y)) \to 1
\]
is exact.
Combined with~\eqref{eq:homisodef} this finishes the proof.
\end{proof}

\subsection{Proof of Theorem~\ref{thm:main}}

As we assume that $H_1(Y, \mathbb{Z}) \cong \pi_1^{\rm ab}(Y)$ is not a torsion group,
  it follows that $\mathrm{Char}(Y)$ has positive dimension.
 So we find a monomorphism
$\mathbb G_m\to \mathrm{Char}(Y)$ of algebraic groups which gives rise by
Proposition~\ref{prop:charvardef} to a definable locally free $ f^{-1}\mathcal O_{\mathbb G_m}$
sheaf $\mathbb L$ of rank one on $X=Y\times {\mathbb G_m}$. Consider the associated definable line bundle
$\mathcal L = \mathbb L \otimes_{ f^{-1}\mathcal O_{\mathbb G_m}} \mathcal O_X$
and the
associated analytic map to the Picard
scheme $\sigma_{\mathcal L}\colon \mathbb G_m \to \mathrm{Pic}(Y)$.

\begin{claim}\label{claim:nonconst}
The
map $\sigma_{\mathcal L}$ is
non-constant.
\end{claim}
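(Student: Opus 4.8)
The plan is to trace through how the line bundle $\mathcal L$ was built and to compute the restriction of $\sigma_{\mathcal L}$ to the fibers. First I recall that $\mathbb L$ is the rank-one locally free $f^{-1}\mathcal O_{\mathbb G_m}$-sheaf corresponding, via Proposition~\ref{prop:charvardef}, to a non-constant algebraic homomorphism $\rho\colon \mathbb G_m\to \mathrm{Char}(Y)$. For a point $t\in \mathbb G_m(\C)$, the fiber $\mathcal L_t$ of $\mathcal L = \mathbb L\otimes_{f^{-1}\mathcal O_{\mathbb G_m}}\mathcal O_X$ over $t$ is, by construction, the holomorphic line bundle on $Y$ associated to the rank-one local system with monodromy $\rho(t)\in \mathrm{Char}(Y)={\rm Hom}(\pi_1^{\rm ab}(Y),\mathbb G_m)$; concretely it is $L_{\rho(t)} = \widetilde Y\times_{\rho(t)}\C$, the flat line bundle attached to the character $\rho(t)$. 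By definition $\sigma_{\mathcal L}(t)$ is the class $[\mathcal L_t]\in \mathrm{Pic}(Y)$.

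The key step is then to identify the composite $\mathbb G_m\xrightarrow{\rho}\mathrm{Char}(Y)\to \mathrm{Pic}(Y)$, where the second map sends a character $\chi$ to the class of the flat line bundle $L_\chi$. This is a group homomorphism $\mathrm{Char}(Y)\to \mathrm{Pic}^0(Y)$, and it is exactly the analytic parametrization of the "unitary-type" piece arising from the exponential sequence: under the identification $\mathrm{Pic}^0(Y)^{\rm an}\cong H^1(Y,\mathcal O_Y)/H^1(Y,\mathbb Z)$, the map factors through the surjection $\mathrm{Char}(Y)=H^1(Y,\mathbb C^\times)\twoheadrightarrow H^1(Y,\mathcal O_Y)/H^1(Y,\mathbb Z)$ coming from $\mathbb C^\times\cong \mathbb C/\mathbb Z$ and the Hodge projection $H^1(Y,\mathbb C)\to H^1(Y,\mathcal O_Y)$. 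In particular this map $\mathrm{Char}(Y)\to \mathrm{Pic}^0(Y)$ has finite kernel: its kernel is the image of $H^1(Y,\Omega^1_Y)$-type classes, but on the level of algebraic groups $\mathrm{Char}(Y)$ and $\mathrm{Pic}^0(Y)$ both have dimension $\dim_{\C}H^1(Y,\mathcal O_Y)=\tfrac12\dim_{\mathbb Q}H_1(Y,\mathbb Q)$ (for $Y$ smooth projective), and the map is the standard isogeny-like surjection of complex Lie groups with kernel a real torus divided appropriately — so on each one-parameter algebraic subgroup it is non-constant. Hence $\sigma_{\mathcal L} = (\text{this map})\circ\rho$ is non-constant because $\rho$ was chosen non-constant and the map $\mathrm{Char}(Y)^\circ\to \mathrm{Pic}^0(Y)$ is an isogeny of abelian complex Lie groups (hence has finite, in particular proper, fibers).

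The main obstacle I expect is pinning down precisely that the homomorphism $\mathrm{Char}(Y)\to \mathrm{Pic}^0(Y)$ sending a character to its associated flat bundle is non-constant on every non-trivial one-parameter subgroup — equivalently, that it does not kill the image of $\rho$. One clean way around case analysis: choose $\rho$ more carefully. Since $\mathrm{Char}(Y)^\circ$ is a torus of positive dimension and maps onto the abelian variety $\mathrm{Pic}^0(Y)$ with finite kernel, its composition with any non-constant cocharacter $\mathbb G_m\to \mathrm{Char}(Y)^\circ$ is non-constant (a non-constant cocharacter has image a one-dimensional subtorus, whose image in $\mathrm{Pic}^0(Y)$ is again one-dimensional, not a point). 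So it suffices to arrange $\rho$ to land in the identity component, which is harmless since we may precompose with multiplication and the identity component has positive dimension by the hypothesis $H_1(Y,\mathbb Q)\neq 0$. This reduces everything to the surjectivity-with-finite-kernel statement for $\mathrm{Char}(Y)^\circ\to\mathrm{Pic}^0(Y)$, which is classical (e.g.\ it is the content of the comparison between Betti, de Rham and Dolbeault realizations of $H^1$, or simply a dimension count plus the fact that a flat line bundle is trivial iff its character is trivial, up to the finite group $H^1(Y,\mathbb Z)_{\rm tors}$).
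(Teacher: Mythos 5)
Your overall strategy --- computing $\sigma_{\mathcal L}(t)=[L_{\rho(t)}]$ and analyzing the homomorphism $\mathrm{Char}(Y)\to\mathrm{Pic}^0(Y)$, $\chi\mapsto[L_\chi]$ --- is a reasonable framework, but the assertion the whole proof rests on is false: this homomorphism does \emph{not} have finite kernel, and it is not an isogeny. Your dimension count is off: $\mathrm{Char}(Y)^\circ\cong(\mathbb C^\times)^{b_1}$ has dimension $b_1=\mathrm{rank}\,H_1(Y,\mathbb Z)=2q$, not $q=\dim_{\mathbb C}H^1(Y,\mathcal O_Y)$. Writing $\mathrm{Char}(Y)^\circ\cong H^1(Y,\mathbb C)/H^1(Y,\mathbb Z)$ and $\mathrm{Pic}^0(Y)\cong H^1(Y,\mathcal O_Y)/H^1(Y,\mathbb Z)$, the map is induced by the Hodge projection $H^1(Y,\mathbb C)\to H^1(Y,\mathcal O_Y)$, whose kernel is $H^0(Y,\Omega^1_Y)=H^{1,0}$; since $H^{1,0}\cap H^1(Y,\mathbb Z)=0$, the kernel of $\mathrm{Char}(Y)^\circ\to\mathrm{Pic}^0(Y)$ is isomorphic to $\mathbb C^q$, which has positive dimension precisely in the situation of interest ($H_1(Y,\mathbb Q)\neq 0$). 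So the step ``the image of a one-dimensional subtorus is again one-dimensional'' does not follow from your finite-kernel claim, and a priori a non-constant cocharacter could land entirely in the kernel.

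The gap can be closed, and the paper's (much shorter) proof shows how: the kernel $\cong\mathbb C^q$ contains no nontrivial compact subgroup, whereas the image $\rho(\mathbb G_m)$ of a non-constant cocharacter contains the nontrivial compact group $\rho(S^1)$, which consists of \emph{unitary} characters. By Hodge theory ($H^1(Y,\mathbb R)\xrightarrow{\ \sim\ }H^1(Y,\mathcal O_Y)$ as real vector spaces, since $H^1(Y,\mathbb C)=H^{1,0}\oplus\overline{H^{1,0}}$), non-isomorphic unitary rank-one local systems give non-isomorphic holomorphic line bundles; hence $\sigma_{\mathcal L}$ already separates the infinitely many points $\rho(s)$ with $|s|=1$, and is therefore non-constant. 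You should replace the finite-kernel/isogeny claim by this unitary restriction argument, or equivalently by the observation that $\exp(H^{1,0})$ contains no nontrivial subtorus of $\mathrm{Char}(Y)^\circ$.
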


\begin{proof}
For $s\in \mathbb G_m$ with $|s|=1$
the local system $\mathbb L_s$ on $Y$ is unitary. Non-isomorphic unitary local systems of
rank one
give rise to non-isomorphic line bundles by Hodge theory. So the images by
$\sigma_{\mathcal L}$ of these infinitely many
unitary $s$ are pairwise different, i.e.\  $\sigma_{\mathcal L}$ is not constant.
\end{proof}

Finally, if the definable sheaves $f_* \mathcal L(n)$ were coherent for all $n> 0$ then
$\sigma_{\mathcal L}$ would be definable by Proposition~\ref{prop:defsec}. But then
$\sigma_{\mathcal L}$ would be algebraic by the definable Chow
theorem~\cite[Cor.~4.5]{PS09}. However, algebraic maps from a rational variety to an abelian variety
are constant, see for example~\cite[Cor.~3.9]{Mil86}, so $\sigma_{\mathcal L}$ is constant
contradicting Claim~\ref{claim:nonconst}.


\begin{thebibliography}{BBDE04}
\bibitem[BBT23]{BBT23} Bakker, B., Brunebarbe, Y., Tsimerman, J.: {\it o-minimal GAGA and a conjecture of Griffiths}, Invent. math. {\bf 232} (2023), 163--228.
\bibitem[Mil86]{Mil86} Milne, J. S.: {\it
Abelian varieties}, in
Arithmetic geometry (Storrs, Conn., 1984), 103--150, Springer, New York, 1986.
\bibitem[GR84]{GR84} Grauert, H., Remmert R.: {\it Coherent Analytic Sheaves}, Grundlehren der mathematischen 
Wissenschaften {\bf 265}, Springer Verlag (1984).
\bibitem[Gro95]{Gro95} Grothendieck, A.: {\it Technique de descente et th\'eor\`emes
    d'existence in g\'eom\'etrie alg\'ebrique. V. Les sch\'emas de Picard:propri\'et\'es
    g\'en\'erales. VI. Th\'eor\`eme d'existence.} Soci\'et\'e math\'ematique de France
  (1995),  Exp. no {\bf 232}, 143--161, Exp. no {\bf 236}, 221--243.
  \bibitem[Har77]{Har77}  Hartshorne, R.: {\it Algebraic Geometry}, Graduate texts in mathematics {\bf 52}, Springer Verlag (1977).
\bibitem[PS09]{PS09} Peterzil, Y.,  and Starchenko, S.: {\it  Complex analytic geometry and analytic-geometric categories}, J.
Reine Angew. Math. {\bf 626} (2009), 39--74.
\bibitem[Sim94]{Sim94} Simpson, C.: {\it Moduli of representations of the fundamental
    group of a smooth projective variety I and II}, Publ. math. I.H.\'E.S. {\bf 79} and
  {\bf 80} (1994), 47--129 and 5--79.
  \bibitem[vdD98]{vdD98} van den Dries, L.: {\it
Tame topology and o-minimal structures},
London Math. Soc. Lecture Note Ser., {\bf 248}
Cambridge University Press, Cambridge, 1998. x+180 pp.
\end{thebibliography}
\end{document}